\DeclareMathAlphabet{\mathbfit}{OT1}{cmss}{bx}{it}
\DeclareMathAlphabet\mathbfcal{OMS}{cmsy}{b}{n}
\newcommand\reallywidehat[1]{%
\savestack{\tmpbox}{\stretchto{%
  \scaleto{%
    \scalerel*[\widthof{\ensuremath{#1}}]{\kern-.7pt\bigwedge\kern-.7pt}%
    {\rule[-\textheight/2]{1.5ex}{\textheight}}
  }{\textheight}%
}{0.9ex}}%
\stackon[2.25pt]{#1}{\tmpbox}%
}
\DeclareMathAlphabet{\mathbfit}{OT1}{cmss}{bx}{it}
\DeclareMathAlphabet\mathbfcal{OMS}{cmsy}{b}{n}
\newcommand{\n}{\mathcal{N}}
\numberwithin{equation}{section}
\newtheorem{theorem}[equation]{Theorem}
\newtheorem{lemma}[equation]{Lemma}
\theoremstyle{definition}
\newtheorem{remark}[equation]{Remark}
\begin{document}
\title[Multiparameter extensions:Strong variational bounds]{Multiparameter extensions of the Christ-Kiselev maximal theorem: strong variational bounds}

\author{Himali Dabhi}
\address[Himali Dabhi]{School of mathematics, the university of edinburgh and the maxwell institute for the mathematical sciences, 
James Clerk Maxwell Building,
The King's Buildings,
Peter Guthrie Tait Road,
City Edinburgh,
EH9 3FD}
\email{h.p.dabhi@sms.ed.ac.uk}

\newcommand{\bk}[1]{{\color{red}{#1}}}

\setcounter{tocdepth}{1}
\maketitle
\begin{abstract}
    For a linear operator $T$ bounded from $L^p(Y)$ to $L^q(X)$, the Christ-Kiselev theorem gives $L^p \to L^q$ bounds for the maximal function  $T^{*}$ associated to filtrations on $Y$. This result has been extended by establishing bounds for the maximal function associated to a product of filtrations, also known as the multiparameter extension of the Christ-Kiselev theorem. In this note, we strengthen the multiparameter theorem by proving the $r$-variational bounds for the multiparameter trunctations when $r>p$. Furthermore, we replace $T$ by a multilinear operator to obtain a strong variational, multilinear, multiparameter extension of the Christ-Kiselev theorem.
\end{abstract}

\section{Introduction} In their influential paper \cite{CK}, Christ and Kiselev establish a general maximal function bound on arbitrary measure spaces $(Y,\nu)$ with a filtration
$\{{\mathcal Y}_n\}_{n\in {\mathbb Z}}$ (defined as a family\footnote{Since the bounds will be independent of the filtration, the same bounds hold for continuous filtrations $\{{\mathcal Y}_t\}_{t\in {\mathbb R}}$.} of measurable sets that are nested, ${\mathcal Y}_n \subseteq {\mathcal Y}_{n+1}$ for all $n\in {\mathbb Z}$). Consider another measure space $(X,\mu)$ and suppose $T: L^p(Y) \to L^q(X)$
is a bounded linear operator with the operator norm $\|T\|_{p,q}$. We associate to $T$ the corresponding
maximal function
$$
T^{*} f(x) \ := \ \sup_{n\in {\mathbb Z}} |T(f \mathbbm{1}_{\mathcal Y_n})(x)|,
$$
for which we have the following.

\begin{theorem}\label{CK-thm} (Christ-Kiselev \cite{CK}) If $1 \leq p < q \leq \infty$, then $T^{*}$ is
bounded from $L^p(Y)$ to $L^q(X)$ with the operator norm
\begin{equation}\label{CK-pq}
\|T^{*}\|_{p,q} \ \le \ (1 - 2^{-(p^{-1} - q^{-1})})^{-1} \, \|T\|_{p,q}.
\end{equation}
\end{theorem}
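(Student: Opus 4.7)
The plan is to prove Theorem \ref{CK-thm} via the classical binary-tree dyadic decomposition adapted to the filtration $\{\mathcal{Y}_n\}$. By homogeneity, normalize $\|f\|_p=1$, and set $\Phi(n):=\int_{\mathcal{Y}_n}|f|^p\,d\nu$, a nondecreasing function on $\mathbb{Z}$ with values in $[0,1]$. The first step is to construct, for each $k\geq 0$, a collection $\mathcal{A}_k$ of at most $2^k$ pairwise disjoint measurable subsets of $Y$, each of the form $\mathcal{Y}_{m}\setminus\mathcal{Y}_{m'}$, such that (i) every $A\in\mathcal{A}_k$ satisfies $\int_A|f|^p\,d\nu\le 2^{-k}$; (ii) $\mathcal{A}_{k+1}$ refines $\mathcal{A}_k$; and (iii) for every $n\in\mathbb{Z}$ the set $\mathcal{Y}_n$ agrees, modulo $|f|^p\,d\nu$-null sets, with a disjoint union $\bigsqcup_{k\geq 0}B_k(n)$, where $B_k(n)\in\mathcal{A}_k\cup\{\emptyset\}$. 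One builds $\mathcal{A}_k$ recursively by bisection: given $\mathcal{A}_{k-1}$, split each $A=\mathcal{Y}_m\setminus\mathcal{Y}_{m'}\in\mathcal{A}_{k-1}$ at an index $m^\star$ with $\Phi(m^\star)$ as close as possible to $\tfrac{1}{2}(\Phi(m)+\Phi(m'))$, producing two sets of the same form whose $|f|^p$-masses each lie in $[0,2^{-k}]$.

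Granting this construction, linearity gives $T(f\mathbbm{1}_{\mathcal{Y}_n})=\sum_{k\ge 0}T(f\mathbbm{1}_{B_k(n)})$, hence
$$
T^{*}f(x)\ \le\ \sum_{k\ge 0}\sup_{A\in\mathcal{A}_k}|T(f\mathbbm{1}_A)(x)|\ \le\ \sum_{k\ge 0}\Bigl(\sum_{A\in\mathcal{A}_k}|T(f\mathbbm{1}_A)(x)|^q\Bigr)^{1/q},
$$
with the standard $\sup$ interpretation when $q=\infty$. Applying Minkowski's inequality in $L^q$, the $L^p\to L^q$ bound on $T$, and the facts $|\mathcal{A}_k|\le 2^k$ and $\|f\mathbbm{1}_A\|_p\le 2^{-k/p}$ for $A\in\mathcal{A}_k$,
\begin{align*}
\|T^{*}f\|_q\ &\le\ \sum_{k\ge 0}\Bigl(\sum_{A\in\mathcal{A}_k}\|T(f\mathbbm{1}_A)\|_q^q\Bigr)^{1/q}\ \le\ \|T\|_{p,q}\sum_{k\ge 0}\Bigl(\sum_{A\in\mathcal{A}_k}\|f\mathbbm{1}_A\|_p^q\Bigr)^{1/q}\\
&\le\ \|T\|_{p,q}\sum_{k\ge 0}\bigl(2^k\cdot 2^{-kq/p}\bigr)^{1/q}\ =\ \|T\|_{p,q}\sum_{k\ge 0}2^{-k(1/p-1/q)},
\end{align*}
and the geometric series sums to $(1-2^{-(p^{-1}-q^{-1})})^{-1}$, yielding \eqref{CK-pq}.

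The main obstacle is the first step: arranging the bisection so that the refinements are simultaneously compatible with the filtration structure (so that each $\mathcal{Y}_n$ is recovered as a disjoint union of pieces, one from each scale) and so that the mass bound $\int_A|f|^p\,d\nu\le 2^{-k}$ is preserved at every level. Jump discontinuities of $\Phi$ force some pieces to be empty, and one must verify that empty pieces are assigned consistently so as to reconstitute $\mathcal{Y}_n$ exactly. Once this combinatorial scaffolding is in place, the remainder is a short computation with Minkowski's inequality and a geometric series, and the hypothesis $p<q$ enters only to make the series convergent.
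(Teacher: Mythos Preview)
Your overall strategy is the Christ--Kiselev argument the paper invokes (and whose machinery it reproduces in Lemma~\ref{decomposition}): build a dyadic family of pieces adapted to the $|f|^p$-mass, write each $\mathcal{Y}_n$ as a union of one piece per scale, then apply Minkowski and sum a geometric series. Once the decomposition is in hand, your computation is correct and matches the paper's.

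The gap is in the construction, and not quite for the reason you give. The claim that bisecting $A=\mathcal{Y}_m\setminus\mathcal{Y}_{m'}$ at the filtration index $m^\star$ nearest the mass-midpoint yields two children \emph{each of mass $\le 2^{-k}$} is false in general: if some annulus $\mathcal{Y}_{m}\setminus\mathcal{Y}_{m-1}$ carries $|f|^p$-mass greater than $1/2$, no choice of $m^\star$ can split $Y$ into two filtration-differences of mass $\le 1/2$, and the bound $\|f\mathbbm{1}_A\|_p\le 2^{-k/p}$ that drives your geometric series fails at the very first level. Jumps of $\Phi$ do not merely force some pieces to be empty (which would be harmless); they produce pieces that are \emph{too large} ever to be subdivided within the filtration, so property~(i) cannot be achieved by bisection along filtration indices alone.

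The paper, following \cite{CK}, resolves this by first reducing to a \emph{divisible} measure space --- replacing $Y$ by $Y\times[0,1]$ with $T$ precomposed by integration in the auxiliary variable --- so that every measurable set can be halved exactly. One then extends the filtration to a dense index set and builds a map $\varphi:Y\to[0,1]$ satisfying $\varphi^{-1}([0,\lambda(\mathcal{Y}_N)))=\mathcal{Y}_N$ and $\lambda(\varphi^{-1}([0,t)))=t$; setting $B^m_j:=\varphi^{-1}(I^m_j)$ for dyadic $I^m_j\subset[0,1)$ gives pieces of mass \emph{exactly} $2^{-m}$, and your properties (i)--(iii) hold on the nose. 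With this single addition your argument is complete.
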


The Christ-Kiselev proof of Theorem \ref{CK-thm} is elegant and very robust; it extends to Banach-valued functions and such an extension has found applications to Strichartz estimates (see for example, \cite{Tao}). Furthermore, it gives a unified approach to previous results in the literature. For example, inspired by Menshov's \cite{menshov} almost everywhere pointwise convergence result for orthogonal expansions\footnote{More precisely, $\{\phi_n\}$ is an orthonormal system of bounded functions.} $\sum_{n=1}^{\infty} a_n \phi_n(x)$ whenever $a := \{a_n\} \in \ell^p, 1\le p <2$ and Paley's \cite{P} upgrade to a maximal function bound
\begin{equation}\label{menshov-paley}
\big\| S^{*}(a) \bigr\|_{L^{p'}} \le C_p \,
\bigl(\sum_{n\ge 1} |a_n|^p \bigr)^{1/p} \ \ {\rm for} \ \ 1\le p < 2, \ \ \frac{1}{p}+\frac{1}{p'}=1
\end{equation}
where $S^{*}(a)(x) = \sup_N \, \bigl| \sum_{n=1}^N a_n \phi_n(x) \bigr|$,
Zygmund \cite{zygmund} proved
\begin{equation}\label{zyg-FT}
\big\| f^{*} \bigr\|_{L^{p'}({\mathbb R})} \le C_p \, \|f\|_{L^p({\mathbb R})} \ \ {\rm for} \ \ 1\le p < 2
\end{equation}
where
$$
f^{*}(x) \ := \ \sup_{R>0} \ \Bigl| \int_{|y|\le R} f(y) {\rm e}^{-2\pi i x y} dy \Bigr|,
$$
the so-called maximal Hausdorff-Young inequality. Furthermore Menshov (and independently Rademacher) showed that \eqref{menshov-paley} fails when $p=2$ for general orthonormal systems but \eqref{menshov-paley} is true for the trigonometric system when $p=2$. This is the famous Carleson-Hunt theorem (see \cite{C} and \cite{H}) which states that
$$\mathcal{C}f(x) \ := \ \sup_{R}\Bigl| \int_{|y| \leq R} \hat{f}(y){\rm e}^{2 \pi i xy}dy\Bigr|,$$
is bounded on $L^2(\mathbb R)$ and is equivalent by transference to $S^*$ being bounded form $\ell^2(\mathbb Z)$ to $L^2(\mathbb T)$.

The generality of Theorem \ref{CK-thm} allows us to establish higher dimensional variants of the maximal Hausdorff-Young inequality. For a sequence $a = \{a_{{\underline{n}}}\}_{{\underline{n}}\in {\mathbb Z}^d} \in \ell^p({\mathbb Z}^d)$ and a function $f \in L^p({\mathbb R}^d)$, define
$$
S^{*}(a)({\underline{\xi}}) \ := \ \sup_{R>0} \ \Bigl|\sum_{\|{\underline{n}}\| \le R} a_{{\underline{n}}} {\rm e}^{- 2\pi i {{\underline{n}}\cdot {\underline{\xi}}}} \Bigr|
\ \ \ {\rm and} \ \ \ f^{*}({\underline{x}}) \ := \  \sup_{R>0}\ \Bigl| \int_{\|{\underline{y}}\|\le R} f({\underline{y}}) {\rm e}^{-2\pi i {\underline{x}} \cdot {\underline{y}}} d{\underline{y}} \Bigr|,
$$
for ${\underline{\xi}} \in {\mathbb T}^d$ and ${\underline{x}} \in {\mathbb R}^d$, where $\|\cdot\|=\|\cdot\|_{\ell^2_d}$ is the Euclidean norm. 
Then Theorem \ref{CK-thm} implies the dimension-free bounds
$$
\|S^{*}(a) \|_{L^{p'}({\mathbb T}^d)} \le C_{p} \, \|a\|_{\ell^p({\mathbb Z}^d)}
\ \ \ {\rm and} \ \ \
\|f^{*} \|_{L^{p'}({\mathbb R}^d)} \le C_{p} \, \|f\|_{\ell^p({\mathbb R}^d)}
$$
whenever $1 \le p < 2$, establishing almost everywhere pointwise convergence for the spherical partial sums and integrals
$$
S_R(a)({\underline{\xi}}) =  \sum_{\|{\underline{n}}\| \le R} a_{{\underline{n}}} {\rm e}^{- 2\pi i {{\underline{n}}\cdot {\underline{\xi}}}} \ \ \ {\rm and} \ \ \
f_R({\underline{x}}) = \int_{\|{\underline{y}}\|\le R} f({\underline{y}}) {\rm e}^{-2\pi i {\underline{x}} \cdot {\underline{y}}} d{\underline{y}}
$$
whenever $a \in \ell^p({\mathbb Z}^d)$, $f \in L^p({\mathbb R}^d)$ and $1\le p < 2$. We could also replace the Euclidean norm by a general $\ell^q_d$ norm, $1 \leq q \leq \infty$ and we still have the same result. The case $p=2$ is interesting as the result is true if we consider the $\ell^1_{d}$ and $\ell^{\infty}_{d}$ norm; this is an extension of the Carleson-Hunt theorem proved by C. Fefferman \cite{F1}. It is a major open problem to determine whether or not the result holds if we consider $\ell^{q}_{d}$ norms for $1<q<\infty$. Being in higher dimensions, we can also consider multiparameter truncations; for example,
\begin{equation}\label{mps}
 S_{M,N}(a)(\xi,\eta)\:=\ \sum_{|m|\leq M \atop |n| \leq N}a_{m,n}{\rm e}^{-2\pi i (m\xi+n\eta)} \ ; \ f_{M,N}(\xi,\eta)\:=\  \int_{|x| \leq M \atop |y| \leq N}f(x,y){\rm e}^{-2 \pi i(x\xi+y \eta)}dx dy.   
\end{equation}
It was shown by Bulj and Kova\v c in \cite{BK} that the maximal operator
\begin{equation*}
S^{*}(a)(\xi,\eta) \ := \ \sup_{M,N>0}|S_{M,N}(a)(\xi,\eta)| \ \ \ {\rm and} \ \ \ f^{*}(\xi,\eta) \ := \ \sup_{M,N>0}f_{M,N}(\xi,\eta)
\end{equation*}
satisfy the bounds
$$\|S^{*}(a) \|_{L^{p'}({\mathbb T}^2)} \le C_{p} \, \|a\|_{\ell^p({\mathbb Z}^2)}
\ \ \ {\rm and} \ \ \
\|f^{*} \|_{L^{p'}({\mathbb R}^2)} \le C_{p} \, \|f\|_{\ell^p({\mathbb R}^2)}$$
for $1 \leq p <2.$

The main observation of this paper is that the Christ-Kiselev argument extends readily to the multiparameter as well as the multilinear setting. Furthermore we will show that the maximal bounds can be improved to strong variational bounds. This is one of the few instances where variational bounds are established in the multiparameter setting; see \cite{MSW} for more
context on the difficulty of establishing multiparameter variational bounds. 

Establishing variational bounds is important in ergodic theory because maximal function bounds are often not sufficient to imply pointwise almost everywhere limits since there are no natural dense classes of functions in the setting of abstract measure spaces. Due to recent connections with ergodic theory, variational bounds in harmonic analysis have become a central topic over the last 15 to 20 years. Also in \cite{osttw}, it was observed that an upgrade of the maximal Hausdorff-Young inequality to a variational bound can be used to give another proof of the Christ-Kiselev result on generalised eigenfunctions of 1-d Schr\"odinger equations, bypassing the multilinear analysis in \cite{CK}.

We begin by defining the $r$-variation semi-norm and state some of its useful properties. Let $\mathbb I \subset \mathbb R^{d}$ be given with the partial relation $(s_1,\ldots,s_d) \preceq (t_1,\ldots,t_d)$ if $s_i \leq t_i$ for all $i=1,2,\ldots,d$. Suppose that $\{F_{\underline{t}}:\underline{t} \in \mathbb I\}$ be a family of complex-valued measurable functions defined on $X$. For $1 \leq r < \infty$, we define the $r$-variation by 
\begin{equation}
   V_r(F_{\underline{t}}:\underline{t} \in \mathbb I)(x)\:=\ \sup_{\underline{t}_1 \preceq \underline{t}_2 \preceq \ldots \preceq \underline{t}_L}\left(\sum_{l=1}^{L-1}|F_{\underline{t}_{l+1}}(x)-F_{\underline{t}_{l}}(x)|^r\right)^{1/r},  
\end{equation}
where the supremum is taken over all finite length sequences $\underline{t}_1 \preceq \underline{t}_2 \preceq \ldots \preceq \underline{t}_L$.
 It is straight-forward to see that the finiteness of $V_r(F_{\underline{t}}:\underline{t} \in \mathbb I)(x) $ implies that the limit $\lim_{\underline{t} \to \infty}F_{\underline{t}}(x)=\lim_{\min\{t_1,\ldots ,t_d\} \to \infty}F_{\underline{t}}(x)$ exists.

\subsection*{Acknowledgements} I would like to thank my supervisor, Jim Wright, for suggesting the problem and guiding me throughout this project. 
\section{Statement of the theorems}

 Let $T: L^p(Y) \to L^q(X)$ be a bounded linear operator with operator norm $\|T\|_{p,q}$ as in the Christ-Kiselev setup. But now we assume that $Y = Y_1 \times \cdots \times Y_d$ is a product space of measure spaces $(Y_j, \nu_j)$, each with a filtration
$\{{\mathcal Y}^j_N\}_{N\in {\mathbb Z}}$. Consider the multiparameter truncations
$$
T_{{\underline{N}}}(f)(x) \ := \
T(f \mathbbm{1}_{{\mathcal Y}^1_{N^1} \times \cdots \times {\mathcal Y}^d_{N^d}})(x),
$$
where ${\underline{N}} = (N^1, \ldots, N^d) \in {\mathbb Z}^d$. It is convenient to write $\mathcal{Y}^1_{N^1} \times \cdots \times \mathcal{Y}^d_{N^d}$ as $\mathcal{Y}_{\underline{N}}$. The Christ-Kiselev argument extends to establish the following
multiparameter variational bound.

\begin{theorem}\label{multi}
    If $1 \leq p<r,q$, then there exists a constant $C_{p,q,r}$ such that
    $$\bigl\|V_r(T_{\underline{N}}f:\underline{N} \in \mathbb Z^d)\bigr\|_{L^q(X)} \leq C_{p,q,r}\|f\|_{L^p(Y)},$$
    for all $f \in L^p(Y)$, where $C_{p,q,r}=A_{p,r}\|T\|_{p,q}$.
\end{theorem}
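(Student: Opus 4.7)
The plan is to push the Christ-Kiselev binary tree method through to the product setting, using Minkowski's inequality in $\ell^r$ to reduce the $r$-variation at each multi-level of the tree to a square-function-style expression. For each coordinate $i \in \{1,\dots,d\}$, I carry out the one-dimensional Christ-Kiselev construction on $(Y_i,\nu_i)$ with the filtration $\{\mathcal Y^i_N\}_{N\in\Z}$, adapted to the marginal $g_i(y_i) := \int |f(y_1,\ldots,y_d)|^p \prod_{l\neq i} d\nu_l(y_l)$. This produces tree pieces $I^i_{j_i,k_i}$ ($j_i\in\Z_{\ge 0}$, $k_i \in \{0,\dots,2^{j_i}-1\}$) such that every $\mathcal Y^i_N$ is a disjoint staircase union of tree pieces, at most one per level, and $\int_{I^i_{j_i,k_i}} g_i\, d\nu_i \le 2^{-j_i}\|f\|_p^p$. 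Taking products gives the multiparameter pieces $\underline I_{\underline j,\underline k} := \prod_i I^i_{j_i,k_i}$ and the key mass bound $\|f\mathbbm{1}_{\underline I_{\underline j,\underline k}}\|_p^p \le 2^{-\max_i j_i}\|f\|_p^p$, together with a decomposition $f\mathbbm{1}_{\mathcal Y_{\underline N}} = \sum_{\underline j} f_{\underline j}^{\underline N}$ for each $\underline N$, in which $f_{\underline j}^{\underline N}$ is supported in a single product tree piece at multi-level $\underline j$.

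For any chain $\underline N_1 \preceq \cdots \preceq \underline N_L$, Minkowski in $\ell^r_l$ pushes the $r$-variation past the multi-level decomposition:
\[
\Bigl(\sum_l |T_{\underline N_{l+1}}f - T_{\underline N_l}f|^r\Bigr)^{1/r}
\leq \sum_{\underline j}\Bigl(\sum_l |T f_{\underline j}^{\underline N_{l+1}} - T f_{\underline j}^{\underline N_l}|^r\Bigr)^{1/r}.
\]
At each multi-level $\underline j$, since each coordinate's one-parameter staircase sweeps through tree pieces monotonically as $\mathcal Y^i_{N^i}$ increases, each product tree piece $\underline I_{\underline j,\underline k}$ is the support of $f_{\underline j}^{\underline N_l}$ on a single contiguous block of indices. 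Combining $|a-b|^r \le 2^{r-1}(|a|^r+|b|^r)$ with the fact that each piece can be entered and left at most once yields
\[
\sum_l |T f_{\underline j}^{\underline N_{l+1}} - T f_{\underline j}^{\underline N_l}|^r \leq 2^r \sum_{\underline k} |T(f\mathbbm{1}_{\underline I_{\underline j,\underline k}})|^r.
\]
Setting $s := \min(r,q) > p$ and using $\ell^s \hookrightarrow \ell^r$ followed by Minkowski in $L^{q/s}$ bounds the $L^q$ norm of the $r$th root of the right-hand side by $\|T\|_{p,q}\bigl(\sum_{\underline k} \|f\mathbbm{1}_{\underline I_{\underline j,\underline k}}\|_p^s\bigr)^{1/s}$. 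With $a_{\underline k} := \|f\mathbbm{1}_{\underline I_{\underline j,\underline k}}\|_p^p$, the identity $\sum_{\underline k} a_{\underline k} = \|f\|_p^p$, the bound $\max_{\underline k} a_{\underline k} \leq 2^{-\max_i j_i}\|f\|_p^p$, and the elementary estimate $\sum a^{s/p} \leq (\max a)^{s/p-1}\sum a$ combine to give
\[
\Bigl(\sum_{\underline k} a_{\underline k}^{s/p}\Bigr)^{1/s} \leq 2^{-\max_i j_i\,(1/p-1/s)}\|f\|_p.
\]

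Summing over $\underline j$ is now a geometric series in disguise: the number of $\underline j \in \Z_{\ge 0}^d$ with $\max_i j_i = M$ is $O_d(M^{d-1})$, and $1/p - 1/s > 0$ gives absolute convergence, producing the required bound of the form $C_{p,q,r} = A_{p,r}\|T\|_{p,q}$. The main obstacle will be the combinatorial step in the middle paragraph: verifying that at each multi-level each product tree piece is visited on one contiguous block of the chain, which rests on the fact that the one-parameter staircase is monotone with respect to tree ordering in each coordinate (once a piece is dropped from the staircase it cannot reappear, as the filtration only grows) and that the product order on the chain transfers this monotonicity coordinate-wise. Once this step is in hand, the rest of the argument closely follows the classical Christ-Kiselev scheme, and the hypothesis $p < \min(r,q)$ enters precisely where the per-level decay is needed to produce an absolutely summable series over $\underline j$.
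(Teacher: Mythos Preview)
Your argument is correct, and it reaches the same endpoint as the paper (the pointwise bound by $\sum_{\underline j}\bigl(\sum_{\underline k}|T(f\mathbbm 1_{\underline I_{\underline j,\underline k}})|^r\bigr)^{1/r}$, followed by Minkowski and the marginal mass bound), but the route differs in a genuine way. The paper first writes $T_{\underline N_{l+1}}f-T_{\underline N_l}f=T(f\mathbbm 1_{\mathcal Y_{\underline N_{l+1}}\setminus\mathcal Y_{\underline N_l}})$, splits that set into $2^d-1$ rectangular regions, and then proves an extension of the Christ--Kiselev lemma (their Lemma~3.1) which decomposes a \emph{difference} $\mathcal Y_N\setminus\mathcal Y_M$ of filtration elements into at most two dyadic pieces per level (one left sibling, one right sibling). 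The combinatorial step there is that the successive differences in the second coordinate are pairwise disjoint, forcing the map $l\mapsto i(n,l)$ to be injective for each fixed level. By contrast, you decompose each $\mathcal Y_{\underline N_l}$ itself (so only the classical left-sibling staircase is needed and no new lemma about differences), and your combinatorial input is the contiguous-block observation: for a fixed level the staircase piece in each coordinate, viewed as a function of $l$, can be entered and left at most once, hence so can each product piece; the crude bound $|a-b|^r\le 2^{r-1}(|a|^r+|b|^r)$ then converts the variation at that multi-level into the square function at the cost of a factor $2^r$. A second difference is that the paper first proves the case $p<r\le q$ and then invokes monotonicity of $V_r$ in $r$ for $r>q$, whereas you absorb both cases at once via $s=\min(r,q)$ and the embedding $\ell^s\hookrightarrow\ell^r$. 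Your approach is a bit more streamlined (no region splitting, no difference-set lemma) at the price of a harmless constant; the paper's approach yields a slightly cleaner constant and isolates the new decomposition lemma as a reusable tool.
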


Let us consider an application of Theorem \ref{multi}. Recall that C. Fefferman \cite{F} constructed a continuous function on the 2-torus ${\mathbb T}^2$ with Fourier coefficients ${a} = \{a_{m,n}\}$ such that the multiparameter partial sums in (\ref{mps}),
$$
S_{M,N}({a})(\xi,\eta) \ := \ \sum_{|m|\le M} \sum_{|n|\le N} a_{m,n} {\rm e}^{-2\pi i (m\xi + n\eta)}, \ \ {\rm diverge \ for \ every} \ \ (\xi, \eta) \in {\mathbb T}^2.
$$
A simple application of our main theorem gives the bound
\begin{equation}\label{V-SMN}
\bigl\|{\rm V}_r (S_{M,N}({a}): (M,N) \in {\mathbb N}^2 )\|_{L^{p'}({\mathbb T}^2)} \ \le \ C_{p, r} \, \|{a}\|_{\ell^p({\mathbb Z}^2)}
\end{equation}
whenever $1\le p < 2$ and $p<r$.
As a consequence, we see that if ${a} \in \ell^p({\mathbb Z}^2), 1\le p < 2$, then
$$
\lim_{\min\{M,N\} \to \infty} S_{M,N}({a})(x,y) \ \ {\rm exists \ for \ almost \ every} \ \ (x,y) \in {\mathbb T}^2.
$$
Although the Fourier coefficients of a continuous function lies in $\ell^2$, there are continuous functions whose Fourier coefficients do {\it not} lie in any $\ell^p, p<2$; see for example \cite{Z}.

We also observe that the Christ-Kiselev argument extends from linear to multilinear operators. Instead of starting with a bounded linear operator $T:L^p(Y) \to L^q(X)$, we begin with a bounded multilinear operator $T:L^{p_1}(Y^1) \times L^{p_2}(Y^2) \times \ldots \times L^{p_k}(Y^k) \to L^q(X)$ between measure spaces $(Y^1,\nu_1), \ldots, (Y^k,\nu_k)$  and $(X, \mu)$ with the operator norm $\|T\|_{p, q}$ where $p=(p_1,p_2,\ldots,p_k)$. Furthermore, suppose that each $Y^i=Y^i_1 \times \ldots \times Y^i_d$ is a product space of measure spaces $(Y^i_j,\nu_{i,j})$ and $\nu_i=\nu_{i,1} \otimes \ldots \otimes \nu_{i,d}$ is the product measure. We assume that each measure space $Y^i_j$ has a filtration $\{\mathcal{Y}^{i,j}_{N}\}_{N \in \mathbb Z}$ and we consider the truncations

$$T(f_1\mathbbm{1}_{\mathcal{Y}^{1,1}_{N^{1,1}} \times \ldots \times \mathcal{Y}^{1,d}_{N^{1,d}}}, \ldots , f_k\mathbbm{1}_{\mathcal{Y}^{k,1}_{N^{k,1}} \times \ldots \times \mathcal{Y}^{k,d}_{N^{k,d}}}).$$

If $\underline{N}^i=(N^{i,1}, \ldots, N^{i,d}) \in \mathbb Z^d$ and $\mathcal{Y}^{i}_{\underline{N}^i}=\mathcal{Y}^{i,1}_{N^{i,1}} \times \ldots \times \mathcal{Y}^{i,d}_{N^{i,d}}$, then the above expression reduces to $T(f_1\mathbbm{1}_{\mathcal{Y}^1_{\underline{N}^1}}, \ldots, f_k\mathbbm{1}_{\mathcal{Y}^k_{\underline{N}^k}})$ which we shall denote by $T_{\underline{N}^1, \ldots, \underline{N}^k}(f_1,\ldots,f_k)$. We have the following extension of Theorem \ref{multi}.

\begin{theorem}\label{multilinpar}
    If $1 \leq p_1,\ldots, p_k <q,r$, then there is a constant $C_{p,q,r}$ such that
   \begin{equation}\label{mmrvar}
   \|V_r(T_{\underline{N}^1, \ldots, \underline{N}^k}(f_1,\ldots,f_k):\underline{N}^1, \ldots, \underline{N}^k \in \mathbb Z^d)\|_{L^q(X)}  \leq C_{p,q,r} \|f_1\|_{L^{p_1}(Y^1)} \ldots \|f_k\|_{L^{p_k}(Y^k)},
  \end{equation}
    where $C_{p,q,r}=A_{p,r}\|T\|_{p,q}$ and $p=(p_1,p_2,\ldots,p_k)$.
\end{theorem}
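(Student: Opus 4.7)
The plan is to run the Christ--Kiselev dyadic selection argument independently in each of the $kd$ factor spaces $Y^i_j$, using an iterative conditional construction inside each product space $Y^i$, and to exploit the strict inequality $p_i<r$ to absorb the $r$-variation into a geometrically convergent $\ell^r$-sum over dyadic scales. After reducing by homogeneity to $\|f_i\|_{L^{p_i}}=1$ for each $i$, I would construct iteratively in $j=1,\ldots,d$ a family of dyadic atoms of $Y^i_j$ adapted to the filtration $\{\mathcal Y^{i,j}_N\}$ and to the conditional marginal of $|f_i|^{p_i}$ given the atoms already selected in coordinates $1,\ldots,j-1$. This produces product atoms $A^i_{\underline s^i,\boldsymbol\ell^i}=E^{i,1}_{s^{i,1},\ell^{i,1}}\times\cdots\times E^{i,d}_{s^{i,d},\ell^{i,d}}$ (the later factors depending on the earlier labels) with $\|f_i\,\mathbbm 1_{A^i_{\underline s^i,\boldsymbol\ell^i}}\|_{L^{p_i}}^{p_i}=\prod_j 2^{-s^{i,j}}$, and satisfying $\mathbbm 1_{\mathcal Y^i_{\underline N^i}}=\sum_{\underline s^i\in\N^d}\mathbbm 1_{A^i_{\underline s^i,\boldsymbol\ell^i(\underline N^i)}}$ for selector vectors $\boldsymbol\ell^i(\underline N^i)$. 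By induction on $j$ and the classical one-parameter Christ--Kiselev monotonicity, each $\boldsymbol\ell^i(\underline N^i)$ is lex-nondecreasing in $\underline N^i$ with respect to $\preceq$.

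Multilinearity of $T$ then gives
\begin{equation*}
T_{\underline N^1,\ldots,\underline N^k}(f_1,\ldots,f_k)=\sum_{\mathbf{s}\in\N^{kd}}G_{\mathbf{s}}(\underline N^1,\ldots,\underline N^k),\qquad G_{\mathbf{s}}=T\bigl(f_1\mathbbm 1_{A^1_{\underline s^1,\boldsymbol\ell^1(\underline N^1)}},\ldots,f_k\mathbbm 1_{A^k_{\underline s^k,\boldsymbol\ell^k(\underline N^k)}}\bigr),
\end{equation*}
with $\mathbf s=(\underline s^1,\ldots,\underline s^k)$. The triangle inequality for the seminorm $V_r$ reduces matters to bounding each $V_r(G_{\mathbf s})$. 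For fixed $\mathbf s$, $G_{\mathbf s}$ is a step function of $(\underline N^1,\ldots,\underline N^k)$ depending only on the full selector $\boldsymbol\ell=(\boldsymbol\ell^1,\ldots,\boldsymbol\ell^k)$, which is componentwise lex-nondecreasing along any $\preceq$-chain. Hence every selector value is visited on a single contiguous sub-chain, and the elementary inequality $|a-b|^r\le 2^{r-1}(|a|^r+|b|^r)$ yields
\begin{equation*}
V_r(G_{\mathbf s})\;\le\;2\Bigl(\sum_{\boldsymbol\ell}\bigl|T\bigl(f_1\mathbbm 1_{A^1_{\underline s^1,\boldsymbol\ell^1}},\ldots,f_k\mathbbm 1_{A^k_{\underline s^k,\boldsymbol\ell^k}}\bigr)\bigr|^r\Bigr)^{1/r}.
\end{equation*}

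Assuming $r\le q$ (the case $r>q$ is handled via the monotonicity $V_r\le V_{r'}$ applied with any $r'\in(\max_i p_i,q]$), Minkowski's inequality in $L^{q/r}$, the multilinear bound $\|T\|_{p,q}$, and the atomic mass identity give
\begin{equation*}
\|V_r(G_{\mathbf s})\|_{L^q(X)}\;\le\;2\,\|T\|_{p,q}\Bigl(\sum_{\boldsymbol\ell}\prod_{i,j}2^{-s^{i,j}r/p_i}\Bigr)^{1/r}\;=\;2\,\|T\|_{p,q}\prod_{i=1}^k\prod_{j=1}^d 2^{-s^{i,j}(1/p_i-1/r)}.
\end{equation*}
Summing a $kd$-fold geometric series, convergent precisely because each $p_i<r$, establishes \eqref{mmrvar} with $A_{p,r}=2\prod_{i,j}(1-2^{-(1/p_i-1/r)})^{-1}$.

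The main obstacle is exactly the one flagged in \cite{MSW}: multiparameter $r$-variation admits no one-parameter summation-by-parts, so \emph{a priori} it cannot be reduced to a product of one-parameter estimates. Here this is sidestepped by two features of the Christ--Kiselev set-up. First, the iterative conditional construction produces a selector vector that is not merely coordinatewise but \emph{lex}-monotone along $\preceq$-chains, so every selector value is visited at most once; this collapses the multiparameter $V_r$ of the step function $G_{\mathbf s}$ to a simple $\ell^r$-sum over its finitely many values, exactly as in the one-parameter case. Second, the strict gap $p_i<r$ supplies enough geometric decay to close the $kd$-fold scale sum. Verifying the lex-monotonicity of the iterative selector is the one genuinely new technical point: it requires induction on $j$, using that within the region where the earlier selector coordinates are constant the conditional measure in $Y^i_j$ is unchanged, so that the one-parameter CK monotonicity in coordinate $j$ applies directly.
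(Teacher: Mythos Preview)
Your approach is correct but follows a genuinely different route from the paper's proof.

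The paper builds the dyadic atoms from the \emph{independent marginal} measures $\lambda_{i,j}(\cdot)=\int_{\cdots\times\,\cdot\,\times\cdots}|f_i|^{p_i}$ on each factor $Y^i_j$, and then attacks the \emph{increments} $T_{\underline N_{l+1}}-T_{\underline N_l}$ directly: it writes the set difference $\mathcal Y_{\underline N_{l+1}}\setminus\mathcal Y_{\underline N_l}$ as a disjoint union of $2^{kd}-1$ product regions (each a product of filtration sets and annuli), decomposes every factor via Lemma~\ref{decomposition}, and uses the pairwise disjointness of successive annuli to show that, for fixed scale vector, the dyadic labels occurring at different $l$ are all distinct. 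This distinctness argument plays the role your lex-monotonicity plays. Because the marginal product atoms carry no exact mass identity, the paper then inserts a H\"older-type step $\sum_{j,i}\|f\mathbbm 1_{B^m_j\times C^n_i}\|^r\le(\max_{j,i}\|\cdot\|^{r-p})\sum_{j,i}\|\cdot\|^p$, uses only the one-sided bounds $\max\le 2^{-m/p}$ and $\max\le 2^{-n/p}$, and closes the double scale sum by a case split $m\gtrless n$.

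Your iterative conditional construction trades the paper's simpler atom-building for an exact mass identity $\|f_i\mathbbm 1_{A^i}\|_{L^{p_i}}^{p_i}=\prod_j 2^{-s^{i,j}}$, which turns the final summation into a pure product of $kd$ geometric series; and decomposing $T_{\underline N}$ itself (rather than its increments) spares you the $2^{kd}-1$ region split and the left/right-sibling bookkeeping for annuli $[a,b)$. The price is the lex-monotonicity verification for the iterated selector along $\preceq$-chains, and the need to read your identity $\mathbbm 1_{\mathcal Y^i_{\underline N^i}}=\sum_{\underline s^i\in\mathbb N^d}\mathbbm 1_{A^i_{\underline s^i,\boldsymbol\ell^i(\underline N^i)}}$ with the convention that the atom is empty at scales where the selector is undefined (otherwise the total mass on the right would be $1$, not $\|f_i\mathbbm 1_{\mathcal Y^i_{\underline N^i}}\|_{L^{p_i}}^{p_i}$); since those empty atoms contribute $0$ to $G_{\mathbf s}$ and each defined selector value is visited contiguously, your $V_r$ bound goes through. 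Both routes give constants of the same shape.
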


The operator in (\ref{mmrvar}) is an $r$-variational operator given by 
\begin{eqnarray}
&& V_r(T_{\underline{N}^1, \ldots, \underline{N}^k}(f_1,\ldots,f_k):\underline{N}^1, \ldots, \underline{N}^k \in \mathbb Z^d)(x)\nonumber \\ 
 &=& \sup_{\mathcal{S}}\Bigl(\sum_{l=1}^{L-1}|T(f_1\mathbbm{1}_{\mathcal{Y}^1_{\underline{N}^1_{l+1}}},\ldots,f_k\mathbbm{1}_{\mathcal{Y}^k_{\underline{N}^k_{l+1}}})(x)-T(f_1\mathbbm{1}_{\mathcal{Y}^1_{\underline{N}^1_{l}}},\ldots,f_k\mathbbm{1}_{\mathcal{Y}^k_{\underline{N}^k_{l}}})(x)|\Bigr)^{1/r}
\end{eqnarray}
where the supremum is taken over the set $\mathcal{S}$ consisting of all sequences $(\underline{N}^1_1,\ldots, \underline{N}^k_1) \preceq \ldots \preceq (\underline{N}^1_L,\ldots, \underline{N}^k_L)$. For each $l=1,\ldots,L$ and $i=1, \ldots, k$, $\underline{N}^i_l=(N^{i,1}_l,\ldots, N^{i,d}_l)$ is a $d$-tuple of integers. Any increasing sequence in $\mathcal{S}$ is unpacked in $kd$ increasing sequences of integers. Thus, the operator in (\ref{mmrvar}) can be viewed as the $r$-variational operator for a $kd$ paramater family of operators.

\begin{remark}
 It is important to remark that Christ and Kiselev also consider multilinear extensions of their maximal theorem but the multilinear operators they are interested in are of a particular kind which arise from their main application to the growth of generalised eigenfunctions of the 1-dimensional Schr\"odinger operator with potential. Their argument establishing Theorem \ref{CK-thm} easily extends to bound these particular mulitilinear operators {\it but} for their application, Christ and Kiselev need substantial refinements of on how the multilinear operator norm depends on the degree of multilinearity. This requires a delicate analysis that one does not need for the proof of Theorem \ref{multilinpar} above.
\end{remark}

\section{Proof of the main theorems}
\subsection{Preliminaries for the proof of Theorem \ref{multi}}
The main idea of the proof is to dominate the variational operator $V_r$ pointwise by an operator which is a combination of operators formed by truncations over subsets of $Y$ and the size of these sets is dependent on the $L^p$ mass of the function $f \in L^p(Y)$. We first simplify the variational operator $V_r(T_{\underline{N}}f:\underline{N} \in \mathbb Z^d)$. Since $\mathcal{Y}_{\underline{N}}$ denotes the product $\mathcal{Y}^1_{N^1} \times \ldots \times \mathcal{Y}^d_{N^d}$ and $\underline{M} \preceq \underline{N}$ are $d$-tuples of integers, the linearity of $T$ and the relation $\mathcal{Y}_{\underline{M}} \subseteq \mathcal{Y}_{\underline{N}}$ allows us to write 
 $$T(f\mathbbm{1}_{\mathcal{Y}_{\underline{N}}})-T(f\mathbbm{1}_{\mathcal{Y}_{\underline{M}}})=T(f(\mathbbm{1}_{\mathcal{Y}_{\underline{N}}}-\mathbbm{1}_{\mathcal{Y}_{\underline{M}}}))=T(f\mathbbm{1}_{\mathcal{Y}_{\underline{N}}\setminus \mathcal{Y}_{\underline{M}}}).$$   The operator in Theorem \ref{multi} is then reduced to

$$   V_{r}(T_{\underline{N}}f:\underline{N} \in \mathbb Z^d)(x)=\sup_{\underline{N}_1 \preceq \underline{N}_2 \preceq \ldots 
 \preceq \underline{N}_L}\left(\sum_{l=1}^{L-1}|T(f\mathbbm{1}_{\mathcal{Y}_{\underline{N}_{l+1}}\setminus \mathcal{Y}_{\underline{N}_{l}}})(x)|^r\right)^{1/r}.  $$

To avoid cumbersome notation, we prove Theorem \ref{multi} in the two parameter case. The extension to $d$ parameters is straight-forward. In the case when $d=2$ the difference set $\mathcal{Y}_{\underline{N}_{l+1}}\setminus \mathcal{Y}_{\underline{N}_{l}}$ is expressed as a disjoint union of three sets $\mathcal{Y}^{1}_{N^{1}_{l}} \times (\mathcal{Y}^{2}_{N^{2}_{l+1}} \setminus \mathcal{Y}^{2}_{N^{2}_{l}})$, $(\mathcal{Y}^{1}_{N^{1}_{l+1}} \setminus \mathcal{Y}^{1}_{N^{1}_{l}}) \times (\mathcal{Y}^{2}_{N^{2}_{l+1}} \setminus \mathcal{Y}^{2}_{N^{2}_{l}})$ and $(\mathcal{Y}^{1}_{N^{1}_{l+1}} \setminus \mathcal{Y}^{1}_{N^{1}_{l}}) \times \mathcal{Y}^{2}_{N^{2}_{l}}$.
We observe that we can treat each region separately and as the treatment is the same for all regions, it is sufficient to bound the fragment $\tilde{V_r}$ of $V_r$, where
\begin{equation}\label{final}
\Tilde{V_r}(T_{\underline{N}}f:\underline{N} \in \mathbb Z^2)(x) \ : = \ \sup_{(N^{1}_{1},N^{2}_{1}) \preceq \dots \preceq (N^{1}_{L},N^{2}_{L})}\left(\sum_{l=1}^{L-1}|T(f\mathbbm{1}_{\mathcal{Y}^{1}_{N^{1}_{l}} \times (\mathcal{Y}^{2}_{N^{2}_{l+1}} \setminus \mathcal{Y}^{2}_{N^{2}_{l}})})(x)|^r\right)^{1/r}.
\end{equation}
We intend to apply the technique used in \cite{CK} to decompose filtration elements into sets whose measure is dependent on the $L^p$ mass of the function $f$. In (\ref{final}), we encounter sets which are differences of filtration elements which was not the case in \cite{CK}. The Christ-Kiselev method uses the classical dyadic martingale structure on the unit interval and the same can be used to deal with these difference sets above.

Let $(Y,\lambda)$ be a probability space and let $\{\mathcal{Y}_{N}\}_{N \in \mathbb Z}$ be a filtration of $Y$. As in \cite{CK}, we may assume that $Y$ is divisible; that is, for every measurable set $S \subseteq Y$ and $t \in [0,\lambda(S)]$ there is a measurable set $S' \subseteq S$ such that $\lambda(S')=t$. See the remarks after the proof of Lemma \ref{decomposition}. We have the following lemma which plays an important role in our argument. The proof is postponed to Section \ref{lemproof}.
\begin{lemma}\label{decomposition}
Let $(Y,\lambda)$ be a probability space and let $\{\mathcal{Y}_N\}_{N \in \mathbb Z}$ be a filtration of $Y$. There exists a collection $\{B_{j}^{m}\}$ of measurable subsets of $Y$ indexed by $m \in \{0,1,\ldots\}$ and $1 \leq j \leq 2^m$, satisfying
    \begin{enumerate}
        \item For each $m$, $\{B^{m}_{j}:1 \leq j \leq 2^m\}$ partitions Y into disjoint measurable subsets.
        \item Each $B^{m}_{j}$ is a union of precisely two sets $B^{m+1}_{j_1}$ and $B^{m+1}_{j_2}$.
        \item $\lambda(B^{m}_{j})=2^{-m}$ for all $m$, $j$.
        \item If $\mathcal{Y}_{M}, \mathcal{Y}_{N}$ are filtration elements, then there exists subsets $\mathbb N^1$ and $\mathbb N^2$ of $\mathbb N$ such that $\mathcal{Y}_{N}\setminus \mathcal{Y}_{M}$ may be decomposed, modulo $\lambda$-null sets, as an empty, finite, or countably infinite union 
        \begin{equation}\label{maindec}
        \mathcal{Y}_{N}\setminus \mathcal{Y}_{M}=\big(\cup_{m \in \mathbb N^1} B^{m}_{j(m)}\big) \cup \big(\cup_{m' \in \mathbb N^2} B^{m'}_{j'(m')}\big).
        \end{equation}
    \end{enumerate}
\end{lemma}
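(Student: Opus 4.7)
The plan is to realise $(Y,\lambda)$ as $([0,1], dx)$ with Lebesgue measure via a measurable, measure-preserving map $\phi: Y \to [0,1]$ that sends each filtration element $\mathcal{Y}_N$ onto the initial interval $[0, a_N)$, where $a_N := \lambda(\mathcal{Y}_N)$. The family $\{B_j^m\}$ will then be defined as the $\phi$-pullback of the standard dyadic partition of $[0,1]$, namely
$$B_j^m := \phi^{-1}\bigl([(j-1)2^{-m},\, j\,2^{-m})\bigr) \qquad \text{for } m \ge 0,\ 1 \le j \le 2^m.$$
With this definition, properties (1)-(3) will be immediate from the measure-preservation of $\phi$ combined with the standard dyadic structure of $[0,1]$.

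To construct $\phi$, note that $a_N$ is non-decreasing in $N$, and set $a_{-\infty} := \lim_{N \to -\infty} a_N$, $a_{\infty} := \lim_{N \to \infty} a_N$. Using divisibility repeatedly, I would choose measure-preserving bijections (modulo $\lambda$-null sets) from $\bigcap_N \mathcal{Y}_N$ onto $[0, a_{-\infty})$, from each successive layer $\mathcal{Y}_N \setminus \mathcal{Y}_{N-1}$ onto $[a_{N-1}, a_N)$ for $N \in \mathbb{Z}$, and from the complement $Y \setminus \bigcup_N \mathcal{Y}_N$ onto $[a_\infty, 1]$. Divisibility is used in the standard way to produce, within each layer, a right-continuous nested family of subsets of prescribed measures and then to define the restriction of $\phi$ via the associated quantile function. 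Gluing these restrictions yields $\phi$.

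For property (4), when $M \preceq N$ one has $\mathcal{Y}_N \setminus \mathcal{Y}_M = \phi^{-1}([a_M, a_N))$ modulo null sets, so it suffices to decompose $[a_M, a_N) \subseteq [0,1]$ as a union of dyadic intervals organised into two chains, each with at most one dyadic interval per scale. Let $m_0$ be the smallest non-negative integer such that $[a_M, a_N)$ contains a dyadic interval of length $2^{-m_0}$, and let $[c,d) \subseteq [a_M, a_N)$ be such a maximal dyadic interval. Since $c$ and $d$ are multiples of $2^{-m_0}$, the binary expansions of the residual lengths $c - a_M,\, a_N - d < 2^{-m_0}$ provide dyadic decompositions
$$[a_M, c) = \bigsqcup_{m \in L} I_m \qquad \text{and} \qquad [d, a_N) = \bigsqcup_{m \in R} J_m,$$
with $L, R \subseteq \{m_0+1, m_0+2, \ldots\}$ and $I_m, J_m$ dyadic of length $2^{-m}$. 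Taking $\mathbb{N}^1 := \{m_0\} \cup L$ (with the level-$m_0$ set being $[c,d)$ itself) and $\mathbb{N}^2 := R$ and pulling back by $\phi$ yields the decomposition (4). The main obstacle I anticipate is the construction of $\phi$, where the null-set bookkeeping requires care when the filtration is $\mathbb{Z}$-indexed with possibly nontrivial $a_{-\infty}$ and $1 - a_\infty$; the dyadic combinatorics behind (4) are elementary once $\phi$ is in place.
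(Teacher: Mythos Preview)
Your approach coincides with the paper's: both pull back the standard dyadic grid on $[0,1]$ through a measure-preserving map $\phi:Y\to[0,1]$ adapted to the filtration (the paper builds $\varphi$ globally by extending $\{\mathcal{Y}_N\}$ to a dense index set via repeated bisection and setting $\varphi(y)=\inf\{\lambda(Z_s):y\in Z_s\}$, which is the same quantile construction you describe layer by layer), and both reduce (4) to writing $[a_M,a_N)$ as a union of at most two dyadic intervals per scale, the paper organising this via left/right siblings rather than binary expansions of residuals.

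One small correction to your dyadic step: the claim that the residuals satisfy $c-a_M,\;a_N-d<2^{-m_0}$ can fail, because $[a_M,a_N)$ may contain \emph{two} dyadic intervals of length $2^{-m_0}$ (take $[1/4,3/4)$ with $m_0=2$, where both $[1/4,1/2)$ and $[1/2,3/4)$ appear); when that happens, assign one of them to $\mathbb{N}^1$ and the other to $\mathbb{N}^2$ at level $m_0$, after which the remaining residuals on each side are genuinely shorter than $2^{-m_0}$ and your binary-expansion argument goes through.
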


We prove Theorem \ref{multi} using Lemma \ref{decomposition} for some probability space $(Y, \lambda)$; and in that case, we do not have to worry about the $\lambda$-null sets.

\subsection{Classical dyadic intervals}
Let $I_0=[0,1)$ and let $\mathcal{D}$ be the collection of all dyadic intervals of $[0,1)$; that is, 
$$\mathcal{D}=\{I^{m}_{j}:=[(j-1)/2^m,j/2^m): m\geq 0, j=1,\ldots,2^m\}.$$
For a fixed $m \in \mathbb N$, we can write $[0,1)$ as a disjoint union $[0,1)=\cup_{j=1}^{2^m}I^m_j$. It is easy to see that $I^m_j=I^{m+1}_{2j-1} \cup I^{m+1}_{2j}$. We call $I^m_j$ the \textit{parent} of the \textit{left sibling} $I^{m+1}_{2j-1}$ and the \textit{right sibling} $I^{m+1}_{2j}$ and we write $I^{m}_{j}=P(I^{m+1}_{2j-1})=P(I^{m+1}_{2j})$. If $\mathcal{L}$ is the collection of all left siblings and $\mathcal{R}$ is the collection of all right siblings, then $\mathcal{L}=\{I^{m}_{j} \in \mathcal{D}: j {\rm \ is \ odd} \}$ and $\mathcal{R}=\{I^{m}_{j} \in \mathcal{D}: j {\rm \  is \ even}\}$ and we can partition $\mathcal{D}$ as $\mathcal{D}=\mathcal{L} \cup \mathcal{R}$. We are interested in decomposing any interval $[a,b)$ of $[0,1)$ into a union of the dyadic intervals. To do that, we define the collection $\mathcal{D}_{a,b}$ by 
\begin{equation}
  \mathcal{D}_{a,b}=\{I^{m}_{j} \in \mathcal{D}:I^{m}_{j} \subseteq [a,b), P(I^m_{j}) \not\subseteq [a,b)\}.  
\end{equation}
The countable subcollection $\mathcal{D}_{a,b}$ of $\mathcal{D}$ has following important properties.

\begin{lemma}\label{Dab}
    If $[a,b) \subseteq [0,1)$ and $\mathcal{D}_{a,b}$ is defined as above, then we have the following:
    \begin{enumerate}
        \item If $I$, $I' \in \mathcal{D}_{a,b}$ and $I \neq I'$, then $I \cap I' = \phi$;
        \item $(a,b) \subseteq \cup_{I \in \mathcal{D}_{a,b}}I \subseteq [a,b)$;
        \item For every $m \in \mathbb N$ there is at most one left (right) sibling $I^m_{j}$ in $\mathcal{D}_{a,b}$.
    \end{enumerate}
\end{lemma}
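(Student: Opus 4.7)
The plan is to prove each of the three assertions by unpacking the definition: $\mathcal{D}_{a,b}$ is simply the collection of \emph{maximal} dyadic subintervals of $[a,b)$. The only non-definitional ingredient needed is the standard nesting property of dyadic intervals (any two elements of $\mathcal{D}$ are either disjoint or one contains the other), and all three parts then follow by short case analyses.

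First I would handle (1). If distinct $I, I' \in \mathcal{D}_{a,b}$ intersect, then by nesting one strictly contains the other, say $I \subsetneq I'$. In particular $I \neq I^0_1$, so $P(I)$ exists, and $P(I)$ is the smallest dyadic interval strictly containing $I$; hence $P(I) \subseteq I' \subseteq [a,b)$, contradicting $I \in \mathcal{D}_{a,b}$. For (2), the inclusion $\bigcup_{I \in \mathcal{D}_{a,b}} I \subseteq [a,b)$ is built into the definition. For the reverse inclusion on $(a,b)$, fix $x \in (a,b)$ and consider the nested sequence $\{I^m(x)\}_{m \geq 0}$ of dyadic intervals containing $x$, with $|I^m(x)| = 2^{-m}$; since $x$ lies in the \emph{open} interval, $I^m(x) \subseteq [a,b)$ for all sufficiently large $m$, and selecting the smallest such level $m_0$ yields $P(I^{m_0}(x)) \not\subseteq [a,b)$, so $I^{m_0}(x) \in \mathcal{D}_{a,b}$ and contains $x$.

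For (3), fix $m \geq 1$ and suppose $I^m_{j_1}, I^m_{j_2} \in \mathcal{D}_{a,b}$ are distinct left siblings, so $j_1, j_2$ are odd. The parent of $I^m_{j_i}$ is $I^{m-1}_{(j_i+1)/2} = I^m_{j_i} \cup I^m_{j_i+1}$. Since the parent escapes $[a,b)$ while its left half $I^m_{j_i}$ stays in $[a,b)$, the escape must occur through the right half $I^m_{j_i+1}$; as this sits to the right of $I^m_{j_i}$ and thus above $a$, the only remaining possibility is $(j_i+1)/2^m > b$. Combined with $j_i/2^m \leq b$ (from $I^m_{j_i} \subseteq [a,b)$), this pins $b$ into $[j_i/2^m, (j_i+1)/2^m)$, forcing $j_1 = j_2$. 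The right-sibling claim follows by the mirror-image argument with $a$ and the left half of the parent playing the analogous roles.

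The main (small) obstacle is the bookkeeping in (3): identifying which endpoint --- $a$ or $b$ --- is responsible for the parent escaping $[a,b)$, and using the sibling type of $I^m_j$ to rule out the other. Degenerate boundary cases, notably $[a,b) = [0,1)$ where the relevant interval has no parent, are handled by the natural convention $I^0_1 \in \mathcal{D}_{0,1}$; no further technology is needed.
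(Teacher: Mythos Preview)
Your proof is correct and follows essentially the same approach as the paper: the dyadic nesting property for (1), and for (2) locating a sufficiently small dyadic interval containing $x$ and ascending to its maximal ancestor inside $[a,b)$. For (3) your argument is a minor variant of the paper's: you show that any left sibling $I^m_j \in \mathcal{D}_{a,b}$ forces $b \in [j/2^m,(j+1)/2^m)$ and hence determines $j$, while the paper instead assumes two such siblings $I^m_j, I^m_{j'}$ with $j<j'$ and uses $I^m_{j'}\subseteq[a,b)$ to conclude $I^m_{j+1}\subseteq[a,b)$, so that $P(I^m_j)\subseteq[a,b)$ --- the same contradiction reached from the other side.
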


\begin{proof}
    \begin{enumerate}\item Suppose that $I$, $I' \in \mathcal{D}_{a,b}$ are such that $I \neq I'$ and $x \in I \cap I'$. Let $I=I^m_{j}$ and $I'=I^n_{l}.$ As the two sets have non-empty intersection, $m \neq n$. Let $m>n$. We notice that the $(m-n)^{\rm th}$ parent of $I^m_j$, that is, $P^{m-n}(I^m_j)$ is a dyadic interval at the level $n$ with $x \in P^{m-n}(I^m_j)$. We also have that $x \in I^n_l$ which is only possible when $P^{m-n}(I^m_j)=I^n_l$. But that means that the parent of $I^m_j$ is also in $\mathcal{D}_{a,b}$, a contradiction. Therefore, $I \cap I'=\phi$.

        \item Let $x \in (a,b)$. Then there is an $m \in \mathbb N$ such that $x-a>1/2^m$ and $b-x>1/2^m$ and a unique $j \in \{1,2,\ldots,2^m\}$ such that $x \in I^m_j$. It is easy to see that $I^m_j \subseteq [a,b)$. Indeed, if $(j-1)/2^m<a$, then $(j-1)/2^m <a<x<j/2^m$, or $x-a \leq 1/2^m$, a contradiction. Similarly, $j/2^m<b$ implies $I^m_{j} \subseteq [a,b)$.

        Finally, take $I=P^{k}(I^m_j)$ for which $P^{k+1}(I^m_j) \not\subseteq [a,b)$. As $P^m(I^m_{j})=[0,1)$, $k$ is well defined and finite. This proves our first conclusion. The other inclusion holds because of the definition of $\mathcal{D}_{a,b}$.

        \item Suppose $I^m_j, I^m_{j'} \in \mathcal{L} \cap \mathcal{D}_{a,b}$ and $j \neq j'$. As $j$ and $j'$  are odd, $|j-j'| \geq 2$. Suppose that $j'>j$ so that $j' \geq 2+j$ or $j'-1 \geq j+1$. Therefore, $a \leq (j-1)/2^m <j/2^m<(j+1)/2^m \leq (j'-1)/2^m <j'/2^m<b$, which shows that $I^m_{j+1}=[j/2^m,(j+1)/2^m) \subseteq [a,b)$. That means  $I^m_j \cup I^m_{j+1}=P(I^m_j) \subseteq [a,b)$, a contradiction. Thus, at each level $m$, there is at the most one left sibling in $\mathcal{D}_{a,b}$. We can apply the same argument for right siblings.
    \end{enumerate}
\end{proof}

\subsection{Remark}\label{phianddecomp}
    Lemma \ref{Dab} allows us to decompose $[a,b)$ as a union of left and right siblings  
    $$[a,b)=\left(\bigcup_{s=1}^{\infty}I^{m_s}_{j(m_s)}\right) \cup \left(\bigcup_{t=1}^{\infty}I^{m'_t}_{j'(m'_t)}\right) \cup \{a\},$$
    where $m_1<m_2<\ldots$ and $I^{m_s}_{j(m_s)}$ are elements of $\mathcal{L}$, $m'_1<m'_2<\ldots$ and $I^{m'_t}_{j'(m'_t)}$ are right siblings. Alternatively, we can write the above as
    \begin{equation}\label{[a,b)}
      [a,b)=\big( \cup_{m \in \mathbb N^1}I^m_{j(m)}\big) \cup \big(\cup_{m' \in \mathbb N^2}I^{m'}_{j'(m')}\big) \cup \{a\},  
    \end{equation}
    where $\mathbb N^1$ and $\mathbb N^2$ are subsets of $\mathbb N$ and first union consists of left siblings and the other is a union of right siblings. We also note that if $a=0$, then 
    \begin{equation}\label{[0,b)}
     [0,b)=\cup_{m \in \mathbb N^1}I^m_{j(m)},
    \end{equation}
 where the decomposition has only left siblings.

\subsection{Proof of Lemma \ref{decomposition}}\label{lemproof}
In \cite{CK}, a mapping $\varphi:Y \to [0,1]$ was constructed that satisfies $\varphi^{-1}([0,\lambda(Y_N)))=Y_N$, for $N \in \mathbb Z$ and $\lambda(\varphi^{-1}([0,t)))=t$ for all $t \in [0,1]$. For the convenience of the reader we give this construction here. Divisibility of $Y$ ensures the extension of the filtration $\{\mathcal{Y}_N\}_{N \in \mathbb Z}$ to $\{Z_t\}_{t \in \mathcal{A}}$ where $\mathcal{A}$ is a dense subset of $\mathbb R$ containing $\mathbb Z$ such that $Z_{t}=\mathcal{Y}_t$ for all $t \in \mathbb Z$. This can be accomplished by fixing an annulus $\mathcal{Y}_N\setminus \mathcal{Y}_{N-1}$ and considering a set $Z$ such that $\mathcal{Y}_{N-1} \subseteq Z \subseteq \mathcal{Y}_{N}$ with $\lambda(Z)=(\lambda(\mathcal{Y}_{N-1})+\lambda(\mathcal{Y}_{N}))/2$. We include $Z$ in the collection and repeat the procedure to find sets $Z_1$ and $Z_2$ such that $\mathcal{Y}_{N-1} \subseteq Z_1 \subseteq Z$ and $Z \subseteq Z_2 \subseteq \mathcal{Y}_{N}$ so that $\lambda(Z_1)=(\lambda(\mathcal{Y}_{N-1})+\lambda(Z))/2$ and $\lambda(Z_2)=(\lambda(Z)+\lambda(\mathcal{Y}_{N}))/2$. A repeated application of this procedure gives us the required extension.
 We define $\varphi$ as 
$$\varphi(y)=\sup_{y \notin Z_s}\lambda(Z_s)=\inf_{y \in Z_s}\lambda(Z_s).$$
It is then elementary to show that $\varphi$ satisfies the desired conditions. In particular, $\lambda(\varphi^{-1}\{t\})=0$ for every $t \in [0,1]$. We define 
$$B^m_{j}=\varphi^{-1}(I^{m}_{j})=\varphi^{-1}([(j-1)/2^m,j/2^m)).$$ 
The properties of dyadic intervals trivially implies the first two statements of the lemma. We see that 
$$\lambda(B^{m}_{j})=\lambda\bigl(\varphi^{-1}\bigl(\bigl[\frac{j-1}{2^m},\frac{j}{2^m}\bigr)\bigr)\bigr)=\lambda\bigl(\varphi^{-1}\bigl(\bigl[0,\frac{j-1}{2^m}\bigr)\bigr)\bigr)-\lambda\bigl(\varphi^{-1}\bigl(\bigl[0,\frac{j}{2^m}\bigr)\bigr)\bigr)=\frac{1}{2^m}.$$
Finally,
\begin{eqnarray*}
    \mathcal{Y}_{N} \setminus \mathcal{Y}_{M} &=&  \varphi^{-1}([0, \lambda(\mathcal{Y}_{N}))) \setminus \varphi^{-1}([0, \lambda(\mathcal{Y}_{M}))) \\
  &=& \varphi^{-1}([0, \lambda(\mathcal{Y}_{N}))\setminus [0, \lambda(\mathcal{Y}_{M}))) \\
  &=& \varphi^{-1}([\lambda(\mathcal{Y}_{M}),\lambda(\mathcal{Y}_{N}))). 
\end{eqnarray*}
We use Remark \ref{phianddecomp} to decompose the interval $[\lambda(\mathcal{Y}_{M}),\lambda(\mathcal{Y}_{N}))$ as a union of left and right siblings so that
\begin{eqnarray*}
    \mathcal{Y}_{N} \setminus \mathcal{Y}_{M} &=& \varphi^{-1}([\lambda(\mathcal{Y}_{M}),\lambda(\mathcal{Y}_{N}))) \\
    &=& \varphi^{-1}\Big[ \big( \cup_{m \in \mathbb N^1}I^m_{j(m)}\big) \cup \big(\cup_{m' \in \mathbb N^2}I^{m'}_{j'(m')}\big) \cup \{\lambda(\mathcal{Y}_{M})\} \Big] \\
    &=&  \big( \cup_{m \in \mathbb N^1}B^m_{j(m)}\big) \cup \big(\cup_{m' \in \mathbb N^2}B^{m'}_{j'(m')}\big) \cup \varphi^{-1}\{\lambda(\mathcal{Y}_{M})\},
\end{eqnarray*}
where $\mathbb N^1$ and $\mathbb N^2$ are subsets of the set of natural numbers. This is almost the proof except that we have an extra set $\varphi^{-1}\{\lambda(\mathcal{Y}_{M})\}$. But as observed earlier, it is a set of zero $\lambda$ measure, so we can ignore the set $\varphi^{-1}\{\lambda(\mathcal{Y}_{M})\}$ to obtain the equality (\ref{maindec}) modulo $\lambda$-null sets. 
\qed{}

    We now return to our situation where $Y$ is a product $Y=Y_1 \times Y_2$. Fix $f \in L^p(Y)$. For the proof of Theorem \ref{multi}, we may, without loss of generality assume that $\|f\|_{L^p(Y)}=1$. We define probability measures $\lambda_1$ and $\lambda_2$ on $Y_1$ and $Y_2$ respectively by 
    $$\lambda_1(S)=\int_{S \times Y_2}|f|^p  ; \ \ \ \ \ \lambda_2(U)=\int_{Y_1 \times U}|f|^p.$$ 
    We construct functions $\varphi_1:Y_1 \to [0,1]$ and $\varphi_2:Y_2 \to [0,1]$ as in Lemma \ref{decomposition} to obtain collections $\{B^m_j:m \geq 0,1 \leq j \leq 2^m\}$ on $Y_1$ and $\{C^n_i:n \geq 0,1 \leq i \leq 2^n\}$ on $Y_2$ satisfying the conditions of Lemma \ref{decomposition}. Once we fix $m,n \in \mathbb N$, the collection $\{B^m_j \times C^n_i:1\leq j \leq 2^m,1 \leq i \leq 2^n\}$ forms a grid in the product space $Y_1 \times Y_2$. This grid has the important property that 
    $$\sum_{j=1}^{2^m}\|f\mathbbm{1}_{B^m_j \times C^n_i}\|^{p}_{L^p(Y)}=\|f\mathbbm{1}_{Y_1 \times C^n_i}\|^{p}_{L^p(Y)}; \,\,\,
    \sum_{i=1}^{2^n}\|f\mathbbm{1}_{B^m_j \times C^n_i}\|^{p}_{L^p(Y)}=\|f\mathbbm{1}_{B^m_j \times Y_2}\|^{p}_{L^p(Y)}$$
    because $\{B^m_j:1 \leq j \leq 2^m\}$ partitions $Y_1$ and $\{C^n_i:1 \leq i \leq 2^n\}$ partitions $Y_2$. Thus we can write 
    \begin{equation}\label{equality1}
      1=\|f\|^p_{L^p(Y)}=\sum_{i=1}^{2^n}\|f\mathbbm{1}_{Y_1 \times C^n_i}\|^{p}_{L^p(Y)}=\sum_{j=1}^{2^m}\sum_{i=1}^{2^n}\|f\mathbbm{1}_{B^m_j \times C^n_i}\|^{p}_{L^p(Y)}.  
    \end{equation}
    Note that we proved Lemma \ref{decomposition} for divisible measure spaces. As observed in \cite{CK}, we can, without loss of generality, assume that both $Y_1$ and $Y_2$ are divisible. This is because divisibility can be achieved, for $j=1,2$, by replacing $Y_j$ by $Y_j \times [0,1]$, $\nu_j$ by the product of $\nu_j$ with the Lebesgue measure on $[0,1]$, $T$ by $T \circ \pi$ where $\pi f(y_1,y_2)=\int_{0}^{1}\int_{0}^{1}f((y_1,s_1),(y_2,s_2))ds_1 ds_2$ and boundedness of the variational operator $V_r(T_{\underline{N}}f:\underline{N} \in \mathbb Z^2)$ is then implied by the boundedness of the operator $V_r((T_{\underline{N}} \circ \pi)f: \underline{N} \in \mathbb Z^2)$.
    
    We now give the proof of the main theorem.

\subsection{Proof of Theorem \ref{multi}}
Let $f \in L^p(Y)$ be such that $\|f\|_{L^p(Y)}=1$. Fix a finite increasing sequence $(N^1_1,N^2_1) \preceq \cdots \preceq (N^1_L,N^2_L)$ in $\mathbb Z^2$. For $l=1,\ldots,L$, we write the sets considered in (\ref{final}) as $Y^1_{N^1_l}=\varphi_1^{-1}([0,\lambda(\mathcal{Y}^1_{N^1_l})))$ and $\mathcal{Y}^2_{N^2_{l+1}} \setminus \mathcal{Y}^2_{N^2_{l}}=\varphi_2^{-1}([\lambda(\mathcal{Y}^2_{N^2_{l+1}})), \lambda(\mathcal{Y}^2_{N^2_{l}}))$. Using the decomposition in Lemma \ref{decomposition}, (\ref{[0,b)}) and (\ref{[a,b)}) in Remark \ref{phianddecomp} we write
$$\mathcal{Y}^1_{N^1_{l}}=\bigcup_{m \in \mathbb N^1_{l}}B^m_{j(m,l)}; \ \ \ \ \ \ \  \mathcal{Y}^2_{N^2_{l+1}} \setminus \mathcal{Y}^2_{N^2_{l}}=\bigcup_{n \in \mathbb N^2_{l}}C^{n}_{i(n,l)} \cup \bigcup_{n' \in \mathbb N'^2_{l}}C^{n'}_{i'(n',l)}.$$
The notations $\mathbb N^1_{l}$, $\mathbb N^2_{l}$, $j(m,l)$ and $i(n,l)$ are used to show that these sets depend on $l$ as $l$ varies over $1,\ldots,L$. Therefore, we have
$$\mathcal{Y}^1_{N^1_{l}} \times (\mathcal{Y}^2_{N^2_{l+1}} \setminus \mathcal{Y}^2_{N^2_{l}})=\bigcup_{m \in \mathbb N^1_{l}}B^m_{j(m,l)} \times \Bigl( \bigcup_{n \in \mathbb N^2_{l}}C^{n}_{i(n,l)} \cup \bigcup_{n' \in \mathbb N'^2_{l}}C^{n'}_{i'(n',l)}\Bigr).$$
    The right hand side splits into two identical parts which can be dealt with separately. Without loss of generality, we assume that 
    \begin{equation}\label{decomp}
     \mathcal{Y}^1_{N^1_{l}} \times (\mathcal{Y}^2_{N^2_{l+1}} \setminus \mathcal{Y}^2_{N^2_{l}})=\bigcup_{m \in \mathbb N^1_{l}}B^m_{j(m,l)} \times \bigcup_{n \in \mathbb N^2_{l}}C^{n}_{i(n,l)}.   
    \end{equation}
 Using the linearity of the operator $T$, we can write
$$T(f\mathbbm{1}_{\mathcal{Y}^1_{N^1_{l}} \times (\mathcal{Y}^2_{N^2_{l+1}} \setminus \mathcal{Y}^2_{N^2_{l}})})= \sum_{m \in \mathbb N^1_l} \sum_{n \in \mathbb N^2_l}T(f \mathbbm{1}_{B^m_{j(m,l)} \times C^n_{i(n,l)}}).$$
Thus,
\begin{eqnarray}
   && \Big(\sum_{l=1}^{L-1}|T(f\mathbbm{1}_{\mathcal{Y}^1_{N^1_{l}} \times (\mathcal{Y}^2_{N^2_{l+1}} \setminus \mathcal{Y}^2_{N^2_{l}})})(x)|^r\Big)^{1/r}  \nonumber \\
    &\leq& \Big(\sum_{l=1}^{L-1}\Big(\sum_{m \in \mathbb {N}^1_{l}} \sum_{n \in \mathbb {N}^2_{l}} |T(f\mathbbm{1}_{B^m_{j(m,l)} \times C^{n}_{i(n,l)}})(x)|^r\Big)\Big)^{1/r} \nonumber\\ 
     &\leq& \sum_{m=1}^{\infty}\sum_{n=1}^{\infty} \Big(\sum_{\substack{l:m \in \mathbb{N}^1_{l} \\ n \in \mathbb {N}^2_{l}}}|T(f\mathbbm{1}_{B^m_{j(m,l)} \times C^{n}_{i(n,l)}})(x)|^r\Big)^{1/r}.\label{finalest}
\end{eqnarray}
For fixed $(m,n) \in \mathbb N^2$, consider the sets $A=\{l:m \in \mathbb{N}^1_{l}, n \in \mathbb{N}^2_{l}\}$ and $A'=\{i(n,l):l \in A\}.$ We claim that for every $l,l' \in A$, $l \neq l'$, $i(n,l) \neq i(n,l')$. Let $l <l'$. Then it is easy to see the inclusion $Y^2_{N^2_l} \subseteq Y^2_{N^2_{l+1}} \subseteq Y^2_{N^2_{l'}} \subseteq Y^2_{N^2_{l'+1}}$ implies that $(Y^2_{N^2_{l+1}} \setminus Y^2_{N^2_{l}}) \cap (Y^2_{N^2_{l'+1}} \setminus Y^2_{N^2_{l'}})=\phi$. As $C^n_{i(n,l)}$ and $C^n_{i(n,l')}$ lie in the decomposition of $(Y^2_{N^2_{l+1}} \setminus Y^2_{N^2_{l}})$ and $(Y^2_{N^2_{l'+1}} \setminus Y^2_{N^2_{l'}})$ respectively, $C^n_{i(n,l)} \cap C^n_{i(n,l')}=\phi$. These sets are inverse images of the dyadic intervals $I^n_{i(n,l)}$ and $I^n_{i(n,l')}$ respectively under the map $\varphi_2$, and the disjointness of these sets imply that $i(n,l) \neq i(n,l')$ and hence our claim.
This means that the set $\{(j(m,l),i(n,l)):l\in A\}$ consists of distinct terms, so we can dominate the inner sum in (\ref{finalest}) by the expression 
$$\sum_{m=1}^{\infty}\sum_{n=1}^{\infty}\Bigg(\sum_{j=1}^{2^m}\sum_{i=1}^{2^n}|T(f\mathbbm{1}_{B^{m}_{j} \times C^{n}_{i}})(x)|^r\Bigg)^{1/r}.$$
Finally, as the above expression is independent of the sequence $(N^1_1,N^2_1) \preceq \cdots \preceq (N^1_L,N^2_L)$, we can take the supremum over such sequences to obtain the pointwise bound
\begin{equation}
    \tilde{V}_r(T_{\underline{N}}f:\underline{N} \in \mathbb Z^2)(x) \leq \sum_{m=1}^{\infty}\sum_{n=1}^{\infty}\Bigg(\sum_{j=1}^{2^m}\sum_{i=1}^{2^n}|T(f\mathbbm{1}_{B^{m}_{j} \times C^{n}_{i}})(x)|^r\Bigg)^{1/r}.
\end{equation}
    We first estimate the $L^q$ norm of the term in the inner bracket. Let
     $S_{m,n}f(x)=\Big(\sum_{j=1}^{2^m}\sum_{i=1}^{2^n}|T(f\mathbbm{1}_{B^{m}_{j} \times C^{n}_{i}})(x)|^r\Big)^{1/r}.$
    For $p<r\leq q$, using Minkowski's integral inequality for $q/r \geq 1$ and the hypothesis that $T$ is bounded from $L^p(Y)$ to $L^q(X)$, we have
\begin{eqnarray*}
    \|S_{m,n}f\|_{L^q(X)} &=& \Bigg(\int_{X}\Big(\sum_{j=1}^{2^m}\sum_{i=1}^{2^n}|T(f\mathbbm{1}_{B^{m}_{j} \times C^{n}_{i}})(x)|^r\Big)^{q/r}d\mu(x)\Bigg)^{1/q} \\
    &\leq & \Bigg(\sum_{j=1}^{2^m}\sum_{i=1}^{2^n}\left(\int_{X}|T(f\mathbbm{1}_{B^{m}_{j} \times C^{n}_{i}})(x)|^q d\mu(x)\right)^{r/q}\Bigg)^{1/r} \\
    &=& \Big(\sum_{j=1}^{2^m}\sum_{i=1}^{2^n} \|T(f\mathbbm{1}_{B^{m}_{j} \times C^{n}_{i}})\|_{L^q(X)}^r\Big)^{1/r} \\
    &\leq& \|T\|_{p,q}\Big(\sum_{j=1}^{2^m}\sum_{i=1}^{2^n} \|f\mathbbm{1}_{B^{m}_{j} \times C^{n}_{i}}\|_{L^p(Y)}^r\Big)^{1/r} \\
    &\leq& \|T\|_{p,q} \Big(\max_{j,i}\|f\mathbbm{1}_{B^{m}_{j} \times C^{n}_{i}}\|_{L^p(Y)}^{r-p}\Big)^{1/r}\Big(\sum_{j=1}^{2^m}\sum_{i=1}^{2^n}\|f\mathbbm{1}_{B^{m}_{j} \times C^{n}_{i}}\|_{L^p(Y)}^{p}\Big)^{1/r} \\
    &\leq& \|T\|_{p,q}\Big(\max_{j,i}\|f\mathbbm{1}_{B^{m}_{j} \times C^{n}_{i}}\|_{L^p(Y)}^{r-p}\Big)^{1/r},
\end{eqnarray*}

because $\sum_{j=1}^{2^m}\sum_{i=1}^{2^n}\|f\mathbbm{1}_{B^{m}_{j} \times C^{n}_{i}}\|_{L^p(Y)}^p=1$ as observed in (\ref{equality1}).
We note that
$$\left(\max_{j,i}\|f\mathbbm{1}_{B^{m}_{j} \times C^{n}_{i}}\|_{L^p(Y)}^{r-p}\right)^{1/r} \leq \bigl(\max_{j}\|f\mathbbm{1}_{B^m_j \times Y_2}\|^{r-p}_{L^p(Y)}\bigr)^{1/r} \leq 2^{-m(1/p-1/r)}.$$
Similarly, we also have the bound 
$$\left(\max_{j,i}\|f\mathbbm{1}_{B^{m}_{j} \times C^{n}_{i}}\|_{L^p(Y)}^{r-p}\right)^{1/r} \leq 2^{-n(1/p-1/r)}.$$
If $m \geq n$, then $2^{-m(1/p-1/r)} \leq 2^{-n(1/p-1/r)}$ and so
$$\sum_{m \geq n}\left(\max_{j,i}\|f\mathbbm{1}_{B^{m}_{j} \times C^{n}_{i}}\|_{L^p(Y)}^{r-p}\right)^{1/r} \leq \sum_{m=1}^{\infty}\sum_{n=1}^{m}2^{-n(1/p-1/r)} \leq A_{p,r}.$$
If $n \geq m$, then $2^{-n(1/p-1/r)} \leq 2^{-m(1/p-1/r)}$ and so
$$\sum_{n \geq m}\left(\max_{j,i}\|f\mathbbm{1}_{B^{m}_{j} \times C^{n}_{i}}\|_{L^p(Y)}^{r-p}\right)^{1/r} \leq \sum_{n=1}^{\infty}\sum_{m=1}^{n}2^{-m(1/p-1/r)} \leq A_{p,r}.$$
Combining the two results above, we get
$$\|V_r(T_{N}f:N \in \mathbb Z)\|_{L^q(X)} \leq A_{p,r}\|T\|_{p,q},$$ for every $f \in L^p(Y)$ with $\|f\|_{L^p(Y)}=1$ and $p<r\leq q$. Finally, by the monotonicity of variation semi-norm, we have the theorem for all $r>p$.
\qedsymbol

\subsection{Proof of Theorem \ref{multilinpar}}
Again, to simplify the notation, we prove the theorem for bilinear operators $T$ in the two parameter case. The general case follows in the same manner. Without loss of generality, we consider $f_1 \in L^{p_1}(Y^1)$ and $f_2 \in L^{p_2}(Y^2)$ with $\|f_1\|_{L^{p_1}(Y^1)} = \|f_2\|_{L^{p_2}(Y^2)}=1$. We recall that for $k=1,2$, $Y^k$ is the product space $Y^k=Y^k_1 \times Y^k_2$. We define probability measures $\lambda_{k,1}$ and $\lambda_{k,2}$ on $Y^k_1$ and $Y^k_2$ respectively by 
$$\lambda_{k,1}(S)=\int_{S \times Y^k_2}|f_k|^{p_k}; \ \ \ \  \lambda_{k,2}(T)=\int_{Y^k_1 \times T}|f_k|^{p_k}.$$
We recall that
$$V_r(T_{\underline{N}^1,\underline{N}^2}(f_1,f_2))(x)=\sup_{\mathcal{S}}\Big(\sum_{l=1}^{L-1}|T(f_1\mathbbm{1}_{\mathcal{Y}^1_{\underline{N}^1_{l+1}}},f_2\mathbbm{1}_{\mathcal{Y}^2_{\underline{N}^2_{l+1}}})(x)-T(f_1\mathbbm{1}_{\mathcal{Y}^1_{\underline{N}^1_{l}}},f_2\mathbbm{1}_{\mathcal{Y}^2_{\underline{N}^2_{l}}})(x)|^r\Big)^{1/r},$$
where the supremum is taken over the set $\mathcal{S}$ consisting of increasing sequences $(\underline{N}^1_1,\underline{N}^2_1) \preceq \ldots \preceq (\underline{N}^1_L,\underline{N}^2_L)$.
As $T$ is bilinear, $T(f_1\mathbbm{1}_{\mathcal{Y}^1_{\underline{N}^1_{l+1}}},f_2\mathbbm{1}_{\mathcal{Y}^2_{\underline{N}^2_{l+1}}})$ breaks down into $16$ terms out of which one of the terms is $T(f_1\mathbbm{1}_{\mathcal{Y}^1_{\underline{N}^1_{l}}},f_2\mathbbm{1}_{\mathcal{Y}^2_{\underline{N}^2_{l}}})$, and hence the difference in above sum breaks into $15$ (in general, $2^{kd}-1$) terms. Each of these terms can be treated separately so it is enough to treat the fragment $\Tilde{V}_r$ of $V_r$ given by 
 \begin{equation}
    \tilde{V}_r(T_{\underline{N}^1,\underline{N}^2}(f_1,f_2))(x)= \sup_{\mathcal{S}}\Bigg(\sum_{l=1}^{L-1}|T(f_1\mathbbm{1}_{\mathcal{Y}^{1,1}_{N^{1,1}_l} \times (\mathcal{Y}^{1,2}_{N^{1,2}_{l+1}} \setminus \mathcal{Y}^{1,2}_{N^{1,2}_l})}, f_2\mathbbm{1}_{\mathcal{Y}^{2,1}_{N^{2,1}_l} \times (\mathcal{Y}^{2,2}_{N^{2,2}_{l+1}} \setminus \mathcal{Y}^{2,2}_{N^{2,2}_l})})(x)|^r\Bigg)^{1/r}.
 \end{equation}

For $k=1,2$, we decompose these filtration elements and its differences as
\begin{equation}
  \mathcal{Y}^{k,1}_{N^{k,1}_{l}} \times (\mathcal{Y}^{k,2}_{N^{k,2}_{l+1}} \setminus \mathcal{Y}^{k,2}_{N^{k,2}_{l}})=\bigcup_{m_k \in \mathbb N^{k,1}_{l}}B^{m_k}_{j_k(m_k,l)} \times \bigcup_{n_k \in \mathbb N^{k,2}_{l}}C^{n_k}_{i_k(n_k,l)},  
\end{equation}
where $\mathbb N^{k,1}_{l}$ and $\mathbb N^{k,2}_l$ are subsets of the set of natural numbers.
We recall that the subsets $B^{m_k}_{j_k}$ and $C^{n_k}_{i_k}$ of $Y^{k}_1$ and $Y^{k}_2$ respectively are obtained from Lemma \ref{decomposition} and they satisfy $\lambda_{k,1}(B^{m_k}_{j_k})=2^{-m_k}$ for all $j_k=1,\ldots,2^{m_k}$ and $\lambda_{k,2}(C^{n_k}_{i_k})=2^{-n_k}$ for all $i_k=1,\ldots,2^{n_k}$.
Observe that for $l=1, \ldots, L$, we have
    \begin{eqnarray*}
        &&T\big(f_1\mathbbm{1}_{\mathcal{Y}^{1,1}_{N^{1,1}_{l}} \times (\mathcal{Y}^{1,2}_{N^{1,2}_{l+1}} \setminus \mathcal{Y}^{1,2}_{N^{1,2}_{l}})},f_2\mathbbm{1}_{\mathcal{Y}^{2,1}_{N^{2,1}_{l}} \times (\mathcal{Y}^{2,2}_{N^{2,2}_{l+1}} \setminus \mathcal{Y}^{2,2}_{N^{2,2}_{l}})}\big) \\
        &=& \sum_{m_1 \in \mathbb N^{1,1}_l}\sum_{n_1 \in \mathbb N^{1,2}_l} \sum_{m_2 \in \mathbb N^{2,1}_l} \sum_{n_2 \in \mathbb N^{2,2}_l} T(f_1\mathbbm{1}_{B^{m_1}_{j_1(m_1,l)} \times C^{n_1}_{i_1(n_1,l)}}, f_2\mathbbm{1}_{B^{m_2}_{j_2(m_2,l)} \times C^{n_2}_{i_2(n_2,l)}}).
    \end{eqnarray*}
    Therefore,
    \begin{eqnarray}
        && \Bigg(\sum_{l=1}^{L-1}\big|T\big(f_1\mathbbm{1}_{\mathcal{Y}^{1,1}_{N^{1,1}_{l}} \times (\mathcal{Y}^{1,2}_{N^{1,2}_{l+1}} \setminus \mathcal{Y}^{1,2}_{N^{1,2}_{l}})},f_2\mathbbm{1}_{\mathcal{Y}^{2,1}_{N^{2,1}_{l}} \times (\mathcal{Y}^{2,2}_{N^{2,2}_{l+1}} \setminus \mathcal{Y}^{2,2}_{N^{2,2}_{l}})}\big)(x)\big|^r\Bigg)^{1/r} \nonumber\\
        &\leq& \Bigg(\sum_{l=1}^{L-1}\Big( \sum_{m_1 \in \mathbb N^{1,1}_l}\sum_{n_1 \in \mathbb N^{1,2}_l} \sum_{m_2 \in \mathbb N^{2,1}_l} \sum_{n_2 \in \mathbb N^{2,2}_l} |T(f_1\mathbbm{1}_{B^{m_1}_{j_1(m_1,l)} \times C^{n_1}_{i_1(n_1,l)}}, f_2\mathbbm{1}_{B^{m_2}_{j_2(m_2,l)} \times C^{n_2}_{i_2(n_2,l)}})(x)| \Big)^r\Bigg)^{1/r} \nonumber\\
        &\leq& \sum_{m_1=1}^{\infty}\sum_{n_1=1}^{\infty}\sum_{m_2=1}^{\infty}\sum_{n_2=1}^{\infty} \Bigg(\sum_{\substack{m_1 \in \mathbb N^{1,1}_l\\n_1 \in \mathbb N^{1,2}_l \\ m_2 \in \mathbb N^{2,1}_l\\ n_2 \in \mathbb N^{2,2}_l}}|T(f_1\mathbbm{1}_{B^{m_1}_{j_1(m_1,l)} \times C^{n_1}_{i_1(n_1,l)}}, f_2\mathbbm{1}_{B^{m_2}_{j_2(m_2,l)} \times C^{n_2}_{i_2(n_2,l)}})(x)|^r\Bigg)^{1/r}. \label{finalest1}
    \end{eqnarray}
    The same argument in Theorem \ref{multi} shows that the sets $C_1=\{(j_1(m_1,l),i_1(n_1,l)):l \in A\}$ and $C_2=\{(j_2(m_2,l),i_2(n_2,l)):l \in A\}$ have all distinct terms and hence we can dominate the sum inside the bracket of (\ref{finalest1}) by the larger sum 
    \begin{equation}\label{finalest2}
      \sum_{m_1=1}^{\infty}\sum_{n_1=1}^{\infty}\sum_{m_2=1}^{\infty}\sum_{n_2=1}^{\infty} \Bigg(\sum_{j_1=1}^{2^{m_1}} \sum_{i_1=1}^{2^{n_1}} \sum_{j_2=1}^{2^{m_2}} \sum_{i_2=1}^{2^{n_2}} |T(f_1\mathbbm{1}_{B^{m_1}_{j_1} \times C^{n_1}_{i_1}}, f_2 \mathbbm{1}_{B^{m_2}_{j_2} \times C^{n_2}_{i_2}})(x)|^r\Bigg)^{1/r}.  
    \end{equation}
    $$$$
    As (\ref{finalest2}) is independent of the increasing sequence we initially considered, we can take supremum over all increasing sequences to obtain the pointwise bound
    $$
    \tilde{V}_r(T_{\underline{N}^1,\underline{N}^2}(f_1,f_2))(x) \leq  \sum_{m_1=1}^{\infty}\sum_{n_1=1}^{\infty}\sum_{m_2=1}^{\infty}\sum_{n_2=1}^{\infty}S_{m_1,n_1,m_2,n_2}(f_1,f_2)(x),
    $$
    where
    $$S_{m_1,n_1,m_2,n_2}(f_1,f_2)(x)=\Bigg(\sum_{j_1=1}^{2^{m_1}} \sum_{i_1=1}^{2^{n_1}} \sum_{j_2=1}^{2^{m_2}} \sum_{i_2=1}^{2^{n_2}} |T(f_1\mathbbm{1}_{B^{m_1}_{j_1} \times C^{n_1}_{i_1}}, f_2 \mathbbm{1}_{B^{m_2}_{j_2} \times C^{n_2}_{i_2}})(x)|^r\Bigg)^{1/r}.$$
    For $p<r \le q$, using Minkowski's integral inequality for $q/r \geq 1$ and the boundedness of the operator $T$ to get
\begin{eqnarray*}
   && \|S_{m_1,n_1,m_2,n_2}(f_1,f_2)\|_{L^q(X)} \\
    &\leq& \Big(\sum_{j_1=1}^{2^{m_1}}\sum_{i_1=1}^{2^{n_1}}\sum_{j_2=1}^{2^{n_1}} \sum_{i_2=1}^{2^{n_2}} \|T(f_1\mathbbm{1}_{B^{m_1}_{j_1} \times C^{n_1}_{i_1}}, f_2 \mathbbm{1}_{B^{m_2}_{j_2} \times C^{n_2}_{i_2}})\|^r_{L^q(X)} \Big)^{1/r}\\
    &\leq& \|T\|_{p,q}\big(\sum_{j_1=1}^{2^{m_1}} \sum_{i_1=1}^{2^{n_1}} \|f_1 \mathbbm{1}_{B^{m_1}_{j_1} \times C^{n_1}_{i_1}}\|^r_{L^{p_1}(Y^1)}\big)^{1/r} \big(\sum_{j_2=1}^{2^{m_2}} \sum_{i_2=1}^{2^{n_2}} \|f_2 \mathbbm{1}_{B^{m_2}_{j_2} \times C^{n_2}_{i_2}}\|^r_{L^{p_2}(Y^2)}\big)^{1/r}.
\end{eqnarray*}
    
    For $k=1,2$, we have
    $$\Bigl(\sum_{j_k=1}^{2^{m_k}} \sum_{i_k=1}^{2^{n_k}} \|f_k \mathbbm{1}_{B^{m_k}_{j_k} \times C^{n_k}_{i_k}}\|^r_{L^{p_k}(Y^k)}\Bigr)^{1/r} \leq \Bigl(\max_{j_k,i_k}\|f_{k}\mathbbm{1}_{B^{m_k}_{j_k} \times C^{n_k}_{i_k}}\|_{L^{p_k}(Y^{k})}^{r-p}\Bigr)^{1/r},$$ because $\sum_{j_k=1}^{2^{m_k}}\sum_{i_k=1}^{2^{n_k}}\|f_{k}\mathbbm{1}_{B^{m_k}_{j_k} \times C^{n_k}_{i_k}}\|_{L^{p_k}(Y^k)}^{p_k}=1$. As observed in Theorem \ref{multi}, we can bound $\bigl(\max_{j_k,i_k}\|f_{k}\mathbbm{1}_{B^{m_k}_{j_k} \times C^{n_k}_{i_k}}\|_{L^{p_k}(Y^{k})}^{r-p}\bigr)^{1/r}$ by $2^{-m_k(1/p_k-1/r)}$ and $2^{-n_k(1/p_k-1/r)}$. Finally, we use these bounds appropriately while summing over $m_1,n_1,m_2$ and $n_2$. This completes the proof of Theorem \ref{multilinpar}.
\qedsymbol

\section{The endpoint $r=p$}
In this section we discuss the sharpness of the range of $r$ in Theorem \ref{multi}. For the Fourier transform operator $\mathcal{F}$, it turns out that for $p>1$ the range is sharp. Suppose that $\mathcal{F}_{N}f=\mathcal{F}(f\mathbbm{1}_{[-N,N]})$, $N>0$ is the truncated Fourier transform operator and $f=\chi_{[-1,1]}$ be the characteristic function over the interval $[-1,1]$. We observe that
$$
\mathcal{F}_{N}f(x)=\begin{cases}
                    \frac{\sin (2\pi x)}{\pi x},   & \text{ if $N \ge 1$}\\
                    \frac{\sin (2\pi N x)}{\pi x},  & \text{if $N <1$}.
                                              \end{cases}
$$

It is enough to show that $V_r(\mathcal{F}_{N}f:0<N<1) \not\in L^{p'}$ for $p>1$, where $p'$ is the conjugate exponent of $p$. For $x \in \mathbb R$ consider the sequence $N_{1,x}<\cdots< N_{l,x} < \cdots<N_{L,x}$ where $N_{l,x}=\frac{\pi/2+l\pi}{2\pi x}$. Then
$$
\Bigl|\frac{\sin (2\pi x N_{l+1})-\sin (2\pi x N_{l})}{x}\Bigr|=\frac{2}{|x|},
$$
and for large $x$, $\#\{N_{l,x}:N_{1,x} <\cdots<N_{L,x}<1\} \ge C|x|$. Thus $V_r(\mathcal{F}_{N}f:0<N<1)(x) \ge \frac{C}{|x|^{1/p'}}$ from which it follows that $\|V_p(\mathcal{F}_{N}:N>0)\|_{L^{p'}(\mathbb R)}=\infty$. At the end point $p=r=1$, Theorem \ref{multi} is trivially true with the constant $C_{1,1}=1$. This leads us to the following question: does there exist a constant $C_{p}$ such that 
$$
\|V_p(\mathcal{F}_{N}f(\cdot):N>0)\|_{L^{p',\infty}(\mathbb R)} \leq C_p \|f\|_{L^p(\mathbb R)}?
$$
It is easy to see that if $f$ is the characteristic function of an interval then above inequality is true, which gives us an indication of a possibility of a weak type $(p,q)$ result at the end point $r=p$. One can also ask the same question in the general Christ-Kiselev setup.

\end{document}